\documentclass{amsart}
\usepackage{latexsym}
\usepackage{amstext}
\usepackage{amsmath}
\usepackage{amssymb}
\usepackage{amsthm}
\usepackage{url}

\newtheorem{theorem}{Theorem}
\newtheorem{lemma}[theorem]{Lemma}

\newtheorem{proposition}[theorem]{Proposition}

\theoremstyle{definition}

\DeclareMathOperator{\ord}{ord}
\def\mod#1{{\ifmmode\text{\rm\ (mod~$#1$)}
\else\discretionary{}{}{\hbox{ }}\rm(mod~$#1$)\fi}}

\begin{document}
\title{The Nagell-Ljunggren equation via Runge's method}

\author{Michael A. Bennett}
\address{Department of Mathematics, University of British Columbia, Vancouver, BC, Canada}
\email{bennett@math.ubc.ca}
\thanks{Supported in part by a grant from NSERC}

\author{Aaron Levin}
\address{Department of Mathematics, Michigan State University, East Lansing, MI, U.S.A.}
\email{adlevin@math.msu.edu}
\thanks{Supported in part by NSF grant DMS-1102563}
%\thanks{Supported in part by a grant from NSERC}
%\subjclass{Primary 11D25, 11J86}

\date{\today}
\keywords{}

%-----------------------------------------------------------------------------
\begin{abstract}
The Diophantine equation $\frac{x^n-1}{x-1}=y^q$ has four known solutions in integers $x, y, q$ and $n$ with $|x|, |y|, q > 1$ and $n > 2$. Whilst we expect that there are, in fact,  no more solutions, such a result is well beyond current technology. In this paper, we prove that if $(x,y,n,q)$ is a solution to this equation, then $n$ has three or fewer prime divisors, counted with multiplicity.  This improves a result of Bugeaud and Mih{\u{a}}ilescu.
\end{abstract}

\maketitle

%----------------------------
\section{Introduction}
%----------------------------

The {\it Nagell-Ljunggren equation}
\begin{align}
\label{NL}
\frac{x^n-1}{x-1}=y^q, \quad \text{in integers $|x|>1, |y|>1, n>2, q\geq 2$}
\end{align}
arises in a wide variety of contexts, ranging from  group theory \cite{Gu} to irrationality criteria \cite{Sa}; for an excellent survey, the reader is directed to Bugeaud and Mignotte \cite{BuMi}.
Equation (\ref{NL}) attracted attention initially due to its connection to Catalan's conjecture and the corresponding Diophantine equation
\begin{equation} \label{cat}
x^n-y^q=1.
\end{equation}
Whilst Catalan's conjecture was proven by Mih{\u{a}}ilescu \cite{Mih2} in 2004, the Nagell-Ljunggren equation has, in a certain sense, outlived its more illustrious cousin, in that it remains unknown to date whether the number of solutions to (\ref{NL}) in the four variables $x, y, n$ and $q$ is finite. Indeed, such a conclusion is beyond current technology even in the restricted case where $n=q$. Equation  \eqref{NL}  does have precisely four known solutions:
\begin{align}
\label{sols}
\frac{3^5-1}{3-1}=11^2, \quad \frac{7^4-1}{7-1}=20^2, \quad \frac{18^3-1}{18-1}=7^3  \quad \mbox{ and }  \quad 
\frac{(-19)^3-1}{(-19)-1} = 7^3 \tag{$\mathcal{NL}$}
\end{align}
and there is an impressively large literature providing constraints upon any hitherto unknown ones. In particular, combining results from \cite{BuMi2}, \cite{BMR},  \cite{Lj}, \cite{Mih2},  \cite{Na1} and \cite{Na2}, we have
\begin{proposition} 
\label{summ}
If $(x,y,n,q)$ is a solution of equation \eqref{NL} not in \eqref{sols}, then 
\begin{itemize}
\item $q \geq 3$ is odd,
\item The least prime divisor $p$ of $n$ satisfies $p \geq 5$,
\item $|x| \geq 10^4$ and $x$ has a prime divisor $p \equiv 1 \mod{q}$.
\end{itemize}
\end{proposition}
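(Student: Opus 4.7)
The proposition collects results that appear across several earlier papers, so the plan is to address each clause by invoking the appropriate prior result and indicating the underlying technique. Throughout one may assume that $q$ is prime: if $q$ is composite and $q' \mid q$ is any prime divisor, then $(x, y^{q/q'}, n, q')$ is again a solution of \eqref{NL}.

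For the first clause, given $q \geq 2$, the only alternative to $q \geq 3$ being odd is $q = 2$. But Ljunggren \cite{Lj} classified the solutions with $q = 2$, finding only $(x,n) = (3,5)$ and $(7,4)$, both of which already appear in \eqref{sols}.

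For the second clause, one must exclude $2 \mid n$ and $3 \mid n$, both of which are handled via cyclotomic factorizations following Nagell \cite{Na1, Na2}. In the first case, writing $n = 2m$ yields
\[
\frac{x^n-1}{x-1} = \frac{x^m-1}{x-1} \cdot (x^m+1),
\]
a product of two near-coprime integers whose greatest common divisor divides $2$; each factor must then be essentially a perfect $q$-th power, and a suitable descent (appealing to Catalan's conjecture \cite{Mih2} where applicable) produces a contradiction. The case $n = 3m$ is analogous, now using the factorization through $x^{2m} + x^m + 1$, where the gcd of the two pieces divides $3$.

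For the third clause, the bound $|x| \geq 10^4$ is a finite computational verification from \cite{BMR}. The existence of a prime divisor $\ell \mid x$ with $\ell \equiv 1 \mod{q}$ is due to Bugeaud-Mih\u{a}ilescu \cite{BuMi2}, and starts from the elementary observation that if $\ell \mid x$ is prime, then reducing the equation modulo $\ell$ yields $y^q \equiv 1 \mod{\ell}$; in particular $\ell \nmid y$, and the order of $y$ modulo $\ell$ divides $\gcd(q, \ell - 1) \in \{1, q\}$. If no prime divisor of $x$ were congruent to $1$ modulo $q$, then $y \equiv 1 \mod{\ell}$ for every such $\ell$, and the rest of the Bugeaud-Mih\u{a}ilescu argument derives a contradiction via a more refined analysis of the cyclotomic factors $\Phi_d(x)$ for $d \mid n$ together with size estimates. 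This last step is the main substantive obstacle in assembling the proposition; the remaining ingredients reduce to classical descent, the Ljunggren $q = 2$ analysis, and the (now-proved) Catalan conjecture.
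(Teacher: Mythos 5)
The paper offers no proof of this proposition at all; it is presented purely as a synthesis of prior results, with the references \cite{BuMi2}, \cite{BMR}, \cite{Lj}, \cite{Mih2}, \cite{Na1}, \cite{Na2} doing the work. Your proposal likewise defers to the literature and merely sketches the underlying ideas, so the approach is essentially the same as the paper's. Two small points of caution: (i) your citation \cite{BuMi2} is actually Bugeaud and \emph{Mignotte} (on negative $x$), not Bugeaud--Mih\u{a}ilescu; the prime-divisor-of-$x$ condition and the numerical bound $|x|\geq 10^4$ are principally from Bugeaud--Mignotte--Roy \cite{BMR} (for $x>0$) combined with \cite{BuMi2} (for $x<0$), so the attribution should be adjusted; and (ii) your sketch for the second clause glosses over that Ljunggren's $q=2$ classification and the Nagell $3\mid n$ argument need to be combined with later work (including \cite{BMR} and \cite{Mih2}) to handle negative $x$ and to push the least prime divisor all the way up to $5$ --- the ``suitable descent'' you invoke is nontrivial and is precisely what the cited papers supply.
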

We can in fact say rather more if we additionally assume that $x$ is positive, improving the lower bounds on both $|x|$ and the least prime divisor of $n$ in this case (see \cite{BHM}, \cite{BuMi0}); though similar arguments can be applied to negative values of $x$ to sharpen Proposition \ref{summ}, we will not have need of such a result here.

One feature distinguishing equation (\ref{NL}) from (\ref{cat}) is that the reduction to the case where $n$ is prime is without loss of generality in (\ref{cat}), but not in (\ref{NL}). That being said, there is some degree of control over how composite the exponent $n$ can be. Specifically, writing $\omega (n)$ for the number of distinct prime divisors of $n$, an argument of Shorey \cite{Sho}, together with the results of \cite{Benn}, implies that $\omega (n) \leq q-2$ (for $x$ both positive and negative, though this is stated in \cite{Sho} only for the former).  In
\cite{BM}, Bugeaud and Mih{\u{a}}ilescu sharpened this substantially, proving that if $(x,y,n,q)$ is a solution to \eqref{NL}, with $x > 1$, then necessarily 
$$
\omega(n)\leq \Omega(n)\leq 4,
$$
where $\Omega (n)$ is the total number of prime divisors of $n$, counted with multiplicity. 
Our main result is the following improvement of this:

\begin{theorem}
\label{mtheorem}
If $(x,y,n,q)$ is a solution of \eqref{NL}, then $\omega(n)\leq \Omega (n) \leq  3$.
\end{theorem}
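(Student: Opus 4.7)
My plan is to argue by contradiction: assume $(x,y,n,q)$ solves \eqref{NL} with $\Omega(n) \geq 4$. By the Bugeaud-Mih\u{a}ilescu bound \cite{BM} applied when $x > 1$, together with an analogous argument for $x < 0$, it suffices to rule out $\Omega(n) = 4$. Proposition~\ref{summ} then provides $q \geq 3$ odd, $|x| \geq 10^4$, every prime divisor of $n$ at least $5$, and a prime $p \equiv 1 \pmod{q}$ dividing $x$.

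First I would exploit the multiplicative structure of $n$ to extract a simpler auxiliary equation. Writing $n = mr$ with $r$ a prime divisor of $n$, the factorization
\[
y^q \;=\; \frac{x^n - 1}{x - 1} \;=\; \frac{x^m - 1}{x - 1}\cdot \frac{x^n - 1}{x^m - 1}
\]
together with the classical fact that the greatest common divisor of the two factors on the right divides $r$, and a careful $r$-adic valuation analysis via the lifting-the-exponent lemma (using $r \geq 5$), produces an equation of the form
\[
\frac{x^m - 1}{x - 1} \;=\; r^{\epsilon}\, v^q
\]
for some $\epsilon \in \{0,1\}$ and positive integer $v$, where $\Omega(m) = 3$.

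Runge's method would then be applied to this auxiliary equation. Setting $F(X, V) = r^{\epsilon}(X-1)V^q - X^m + 1 \in \mathbb{Z}[X, V]$ and assigning weights $q$ to $X$ and $m-1$ to $V$, the top-weight part of $F$ equals $X \bigl( r^{\epsilon} V^q - X^{m-1}\bigr)$, a product of coprime nonconstant factors --- the hypothesis of Runge's method. Expanding
\[
V(X) \;=\; \left(\frac{X^{m-1} + X^{m-2} + \cdots + 1}{r^{\epsilon}}\right)^{1/q}
\]
as a Puiseux series at $X = \infty$, one splits into two cases. If $q \mid m-1$, the expansion has a polynomial part $P_0(X) \in \mathbb{Q}[X]$ of degree $(m-1)/q$, and the standard Runge estimate shows $v = P_0(x)$ exactly for $|x|$ larger than a modest explicit threshold; this forces the polynomial identity $r^{\epsilon}(X-1) P_0(X)^q = X^m - 1$ in $\mathbb{Q}[X]$. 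If $q \nmid m-1$, then $V$ has only fractional exponents at infinity, so Runge's estimate gives $v = O(1)$, contradicting $v \sim x^{(m-1)/q}$ for $|x| \geq 10^4$. In the former case, reducing the polynomial identity modulo the prime $p$ with $p \equiv 1 \pmod{q}$ (from Proposition~\ref{summ}) and exploiting the factorization $\Omega(m) = 3$ with all primes at least $5$ should produce the contradiction.

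The main obstacle will be making the Runge estimate \emph{quantitatively} sharp, uniformly across the allowed range $(q, m, r, \epsilon)$, so that the threshold beyond which $v$ must equal $P_0(x)$ exactly (or beyond which $v$ is bounded) falls comfortably below the $10^4$ of Proposition~\ref{summ}. I expect this will require leveraging the multiplicative structure afforded by the three distinct prime factors of $m$ --- in the form of cyclotomic-type subfactorizations of $(x^m-1)/(x-1)$ that impose additional $q$-th power constraints --- to refine the standard Puiseux truncation estimate. The hardest subcases are those in which $q$ is comparable to a prime divisor of $m$, where both the $r$-adic analysis of Step 1 and the Runge bound of Step 2 simultaneously degrade.
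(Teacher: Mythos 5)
Your proposal misses the central technical idea of the paper and, as a result, contains a gap that cannot be repaired without it. You set up Runge's method for the single equation $r^\epsilon v^q = \frac{x^m-1}{x-1}$ and split into cases according to whether $q \mid m-1$. But when $q \nmid m-1$ (which is the generic situation here --- in fact the paper shows, using Bennett's work on $|ax^n-by^n|=1$, that the relevant exponents satisfy $r \not\equiv 1 \pmod q$, so the degree of the auxiliary polynomial is essentially never divisible by $q$), Runge's method simply does not apply: the curve $V^q = (X^{m-1}+\cdots+1)/r^\epsilon$ is absolutely irreducible with a single place at infinity when $\gcd(q,m-1)=1$, and one cannot extract a polynomial principal part of the Puiseux expansion, let alone conclude $v = O(1)$. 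Your assertion that "Runge's estimate gives $v = O(1)$" in this case has no basis. The paper's crucial move is precisely to sidestep this: given \emph{two} simultaneous equations $y^q = \frac{x^r-1}{x-1}$ and $z^q = \frac{x^s-1}{x-1}$ (extracted via Shorey's lemma and the combinatorial Lemma on $\delta_s$), it chooses an exponent $a$ with $a(r-1)+(s-1) \equiv 0 \pmod q$ and applies Runge to the \emph{product} $w^q = \left(\frac{x^r-1}{x-1}\right)^a\left(\frac{x^s-1}{x-1}\right)$, whose degree is divisible by $q$ by construction. (Theorem~\ref{good2} does the same with three factors when the extra factor $p_s^{\delta_s}$ appears.) This is what makes Runge's condition hold unconditionally.

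There are further problems downstream. Even when $q \mid m-1$, the Puiseux coefficients involve $\binom{c/q}{i}$ and therefore have $q$-power denominators, so $P_0(X) \notin \mathbb{Z}[X]$ and $v - P_0(x)$ is not an integer; the paper has to multiply through by $q^{N/q+[N/(q(q-1))]}$ and carefully track $q$-adic valuations (Lemma~\ref{vlem}) to get a \emph{nonzero} integer to bound. Your jump from "$v = P_0(x)$ for one specific $x$" to "a polynomial identity $r^\epsilon(X-1)P_0(X)^q = X^m-1$ in $\mathbb{Q}[X]$" is also unjustified as stated. Your opening reduction also leans on \cite{BM} and an unexplained "analogous argument for $x<0$," whereas the point of the paper is an essentially self-contained argument (the only appeal to Mih\u{a}ilescu's cyclotomic machinery being the terminal case $(n,q)=(p^5,5)$, $p \mid x-1$). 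Finally, your decomposition $n=mr$ via Shorey's Lemma requires $\gcd(m,r)=1$, which fails if $r^2 \mid n$; the paper avoids this by peeling off the full prime-power $p_1^{\alpha_1}$ at the smallest prime, where the required gcd conditions hold automatically, and by tracking the $p_s$-adic contributions through the $\delta_s$'s.
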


The proof of this theorem relies upon a careful application of the classical method of Runge to an  equation of the shape $f(x)=g(y)$, where $f$ and $g$ are polynomials with integer coefficients. The result of Bugeaud and Mih{\u{a}}ilescu \cite{BM} depends fundamentally upon earlier work of Mih{\u{a}}ilescu \cite{Mih}, which one might view, in part, as an application of Runge's method over cyclotomic fields. The proof of the crucial inequality from \cite{Mih}, given in Theorem  2 of that paper, uses tools from cyclotomic field theory and techniques developed in the proof of Catalan's conjecture.  What we prove, in an essentially elementary fashion and independent of the results of \cite{Mih}, is that  solutions to \eqref{NL} necessarily satisfy $\omega(n)\leq \Omega (n) \leq  3$, with the possible exception of the special case where $(n,q)=(p^5,5)$ and $p \mid x-1$, for $p$ prime.  We eliminate this remaining exceptional case by appealing to \cite{Mih}.

\vskip1.5ex
Our main tools in proving Theorem \ref{mtheorem}  are the following pair of results.

\begin{theorem} \label{good}
Suppose that $q, r, s, x, y$ and $z$ are integers with $q$ prime and  $3 \leq r< s$, such that 
\begin{equation} \label{eq1}
y^q=\frac{x^r-1}{x-1}
\end{equation}
and
\begin{equation} \label{eq2}
z^q=\frac{x^s-1}{x-1}.
\end{equation}
Then
\begin{align}
\label{funineq}
|x| < 4 \,  \max \left\{ 1, \frac{s}{qr}  \right\}  \; q^{r-1 + \frac{s-r}{q-1}}.
\end{align}
%If we assume moreover that $r=p$ and $s=p^2$, for an integer $2 \leq p < q$, then 
%\begin{align}
%\label{funineq2}
%|x| < 2 + 2 (p+1) q^{p-1}.
%\end{align}
\end{theorem}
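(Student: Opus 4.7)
The plan is to combine (\ref{eq1}) and (\ref{eq2}) into a single Runge-type identity. Multiplying through by $x-1$ and computing $(x^s-1) - x^{s-r}(x^r-1) = x^{s-r}-1$ yields
$$ z^q - x^{s-r} y^q \;=\; \frac{x^{s-r}-1}{x-1} \;=:\; P(x), \qquad (\star) $$
a polynomial of degree only $s-r-1$, whereas each of the two terms on the left has size roughly $|x|^{s-1}$. This dramatic size asymmetry is exactly the classical Runge feature we wish to exploit. I would reduce first to $x > 0$ (the $x < 0$ case is handled by sign bookkeeping, aided by $q$ being prime), choose $y, z > 0$, and note that $P(x) > 0$ forces $z > x^{(s-r)/q}y$ over the reals.

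The argument then splits on whether $q$ divides $s-r$. When $q \mid s - r$, set $m = (s-r)/q$; then $x^m y$ is a positive integer with $z \ge x^m y + 1$, and $(\star)$ gives $P(x) \ge q(x^m y)^{q-1}$. Substituting the size bounds $y^q \ge x^{r-1}$ (so $(x^m y)^{q-1} \ge x^{(s-1)(q-1)/q}$) and $P(x) \le 2 x^{s-r-1}$ (for $x \ge 2$) produces $x^{(qr-s+1)/q} \le 2^q/q$, which is very strong when $s \le qr$ and needs supplementing when $s > qr$ (precisely the regime where the $\max\{1, s/(qr)\}$ factor in the theorem becomes active). When $q \nmid s-r$, $x^{(s-r)/q}$ is irrational and the integrality step is unavailable; here I would factor $(\star)$ over $\mathbb{C}$ as
$$ \prod_{i=0}^{q-1}\bigl(z - \zeta^i x^{(s-r)/q} y\bigr) = P(x), \qquad \zeta = e^{2\pi\sqrt{-1}/q}, $$
and estimate the $q - 1$ non-real conjugates archimedeanly using $\prod_{j=1}^{q-1} |\sin(\pi j / q)| = q / 2^{q-1}$. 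This yields a sharp upper bound on $|z - x^{(s-r)/q} y|$ in terms of $|P(x)|$, $y^{q-1}$, and $x^{(s-r)(q-1)/q}$, which I would then match against a lower bound coming from the integrality of $z^q - x^{s-r} y^q \in \mathbb{Z} \setminus \{0\}$.

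The main obstacle will be to run the archimedean estimate sharply enough to extract the precise exponent $r - 1 + (s-r)/(q-1)$ together with the prefactor $4\max\{1, s/(qr)\}$. The $q^{r-1}$ part should emerge from the factor $y^{q-1} \ge x^{(r-1)(q-1)/q}$ appearing in the denominator of the archimedean estimate, while the $q^{(s-r)/(q-1)}$ part reflects the loss from the gap between the real positive $q$-th root and its $q-1$ non-real conjugates; isolating the regime $s > qr$ will require an additional refinement. Working systematically over the ring $\mathbb{Z}[\zeta, x^{1/q}]$ and tracking which linear combinations of the factors on the left of $(\star)$ remain rational (and, after clearing denominators, integral) appears to be the cleanest route.
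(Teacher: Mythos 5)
Your identity $(\star)$, $z^q - x^{s-r}y^q = \frac{x^{s-r}-1}{x-1}$, is a genuine Runge-type cancellation and is not what the paper uses. The paper instead invokes Bennett's Theorem 1.1 of \cite{Benn} to conclude that $1, r, s$ are pairwise incongruent modulo $q$ (so in particular $q\nmid s-r$ always), then chooses $1\le a\le q-2$ with $a(r-1)+(s-1)\equiv 0\pmod q$ and applies classical Runge to the single equation $w^q = \left(\frac{x^r-1}{x-1}\right)^a\frac{x^s-1}{x-1}$, whose right side is a degree-$N$ polynomial with $q\mid N$; the $q$-th root then has a Laurent expansion $x^{N/q}\sum_{n\ge 0}a_n x^{-n}$ in \emph{integer} powers of $x$, and the truncation $q^{N/q+\left[N/(q(q-1))\right]}x^{N/q}\sum_{n\le N/q}a_n x^{-n}$ is an integer polynomial whose difference from $q^{N/q+\left[N/(q(q-1))\right]}w$ is a nonzero integer bounded by the tail. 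That integrality step is the whole engine.

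This is precisely where your sketch has a gap in the case that actually matters, $q\nmid s-r$. Your archimedean estimate via $\prod_{j=1}^{q-1}|\sin(\pi j/q)| = q/2^{q-1}$ does show that $|z - x^{(s-r)/q}y|$ is small, roughly $\frac{2}{q}x^{(s-1-qr)/q}$, but smallness of this quantity produces no contradiction: $x^{(s-r)/q}y$ is irrational (being a non-integral power of $x$ times $y$), so $z - x^{(s-r)/q}y$ is never an integer and there is no nonzero rational integer available to lower-bound by $1$. Your closing sentence about ``tracking which linear combinations of the factors remain rational \ldots\ after clearing denominators, integral'' points at exactly the missing ingredient, but never supplies it; producing such a rational (indeed integral) combination is what the paper's choice of exponent $a$ and the resulting truncated polynomial accomplish. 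Your case $q\mid s-r$ is cleanly handled by the $z\ge x^my+1$ trick for $s\le qr$, but it is vacuous in the application (ruled out by Bennett) and, as you concede, it too needs a further idea for $s>qr$. As written the argument does not close.
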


\vskip2ex
\begin{theorem} \label{good2}
Suppose that  integers $x, y, z$ and $u$ and odd primes $p$ and $q$ satisfy
\begin{equation*}
p y^q=\frac{x^p-1}{x-1},
\end{equation*}
\begin{equation*}
p^2 z^q=\frac{x^{p^2}-1}{x-1}
\end{equation*}
and
\begin{equation*}
p^3 u^q=\frac{x^{p^3}-1}{x-1}.
\end{equation*}
Then
\begin{align}
\label{funineq3}
|x| <17  \; p^3 \left( \max \left\{ 1, \frac{p}{q} \right\} \right)^3 \; q^{p^2+p-2+(p^3-1)/(q-1)}.
\end{align}
\end{theorem}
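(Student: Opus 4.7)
My plan is to apply a Runge-type argument to the three hypotheses, in the same spirit as Theorem \ref{good} but now exploiting all three equations. First, using the factorization $\frac{x^{p^k}-1}{x-1} = \prod_{j=0}^{k-1} F_p(x^{p^j})$, where $F_p(T) = (T^p-1)/(T-1)$, the three given equations yield $p z^q = y^q F_p(x^p)$ and $p u^q = z^q F_p(x^{p^2})$. The first hypothesis forces $p \mid x-1$ (since $F_p(x) \equiv (x-1)^{p-1} \pmod{p}$ by Fermat's little theorem applied to the polynomial identity in characteristic $p$), and then lifting-the-exponent gives $v_p(F_p(x^{p^j})) = 1$ for $j = 1, 2$, so both $F_p(x^p)/p$ and $F_p(x^{p^2})/p$ are integers. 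Hence $(z/y)^q$ and $(u/z)^q$ are integers, and since a rational whose $q$-th power is integral is itself integral, we conclude $y \mid z$ and $z \mid u$. Writing $z = ym$ and $u = zk$ with $m, k \in \mathbb{Z}$, we arrive at the parallel system
\begin{equation*}
py^q = F_p(x), \qquad pm^q = F_p(x^p), \qquad pk^q = F_p(x^{p^2}).
\end{equation*}

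Next, I would construct a Runge-style polynomial identity from these three relations, analogous to $z^q - y^q x^{s-r} = F_{s-r}(x)$ in the proof of Theorem \ref{good}. A direct first iteration already yields the identity $p^3 u^q - p y^q x^{p^3-p} = x^{p^3-p^2} F_{p^2-p}(x) + F_{p^3-p^2}(x)$, a polynomial in $x$ of degree $p^3 - p - 1$. The third relation (on $m$ and $F_p(x^p)$) should then permit further cancellations, producing a polynomial identity in $\mathbb{Z}[x, y, m, k]$ whose right-hand side is a polynomial in $x$ of degree corresponding, after the Runge factorization below, to the exponent $p^2 + p - 2 + (p^3-1)/(q-1)$ of $q$ in the final bound.

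Finally, one factors this identity over $\mathbb{Q}(\zeta_q, p^{1/q})$: one factor is bounded above by means of the small-degree polynomial on the right-hand side, while the remaining $q-1$ Galois conjugates are each on the order of $|y x^{(p-1)/q}|^{q-1}$ (or an analogous quantity, depending on the precise identity chosen). The standard Runge lower-bound argument on the resulting nonzero algebraic integer then forces $|x|$ to satisfy the asserted inequality, with the constant $17 p^3 (\max\{1, p/q\})^3$ absorbing the cyclotomic estimates and the various factors of $p$ introduced along the way. The main technical obstacle is constructing the correct Runge identity so that all intended leading-term cancellations occur and one extracts precisely the exponent $p^2 + p - 2 + (p^3-1)/(q-1)$, together with careful book-keeping of the constants and $p$-adic valuations needed to arrive at the clean explicit bound.
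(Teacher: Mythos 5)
Your opening reduction---using $x^p-1\equiv(x-1)^p \pmod p$ and lifting-the-exponent to get $py^q=F_p(x)$, $pm^q=F_p(x^p)$, $pk^q=F_p(x^{p^2})$ with $y\mid z\mid u$---is correct, but it is not the paper's starting point, and nothing you do subsequently actually uses it. The more serious problem is that you have mischaracterized the method of Theorem \ref{good} and, as a result, never pin down the Runge argument that is the entire content of the theorem. Theorem \ref{good} is \emph{not} proved via a subtractive polynomial identity such as $z^q-y^qx^{s-r}=F_{s-r}(x)$ followed by factorization over $\mathbb{Q}(\zeta_q,p^{1/q})$; it is proved by forming the \emph{multiplicative} combination $\left(\frac{x^r-1}{x-1}\right)^a\left(\frac{x^s-1}{x-1}\right)$ with $a$ chosen so that the degree is divisible by $q$, and then applying the classical rational Runge argument: Laurent-expand the $q$th root, truncate at degree $N/q$, use $\ord_q a_n=-n-\ord_q(n!)$ to clear denominators over $\mathbb{Z}$, and bound the nonzero integer $q^{N/q+[N/(q(q-1))]}w\mp P(x)$ by the tail of the series.

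The paper's proof of Theorem \ref{good2} is the same multiplicative argument with three factors. One takes the smallest nonnegative $a,b$ with $a\equiv 2p+1$ and $b\equiv -p-2\pmod q$; then $a+2b+3\equiv 0\pmod q$ (so the power of $p$ appearing on the left of the product is a perfect $q$th power) and the degree $N=a(p-1)+b(p^2-1)+(p^3-1)$ is also divisible by $q$. Setting
$$
w^q=\left(\frac{x^p-1}{x-1}\right)^a\left(\frac{x^{p^2}-1}{x-1}\right)^b\left(\frac{x^{p^3}-1}{x-1}\right),
$$
one writes $w=\pm x^{N/q}\sum b_n x^{-n}$, shows $q^{n+[n/(q-1)]}b_n\in\mathbb{Z}$ and $|b_n|<([n/p]+1)([n/p^2]+1)([n/p^3]+1)(n+1)$, truncates at $n=N/q$ to get $P(x)\in\mathbb{Z}[x]$ with $q^{N/q+[N/(q(q-1))]}w\mp P(x)$ a nonzero integer, and then bounds the tail to get \eqref{funineq3}; the exponent $p^2+p-2+(p^3-1)/(q-1)$ comes directly from $N\leq(q-1)(p^2+p-2)+p^3-1$. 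Your proposal never identifies this combination, never specifies the nonzero integer to be bounded, and explicitly defers ``constructing the correct Runge identity so that all intended leading-term cancellations occur''---which is precisely the step where the proof lives. As written, the proposal contains a genuine gap: it is a plan for a different (cyclotomic-factorization) argument, with the decisive construction left undone.
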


%----------------------------
\section{Preliminaries}
%----------------------------

Before proceeding with the proof of Theorem \ref{mtheorem}, we require a pair of lemmata of a combinatorial nature. Given solutions to (\ref{NL}), these will enable us to deduce the existence of simultaneous solutions to a number of related equations. 
The first is a result of Shorey \cite[Lemma 7]{Sho}.  For a positive integer $n$, let $Q_n=\phi(G(n))$, where $G(n)$ is the square-free part of $n$ and $\phi$ is Euler's function.

\begin{lemma}
\label{Sholem}
Let $(x,y,n,q)$ be a solution of \eqref{NL} with $n$ odd.  If the divisor $D$ of $n$ satisfies $(D,n/D)=(D,Q_{n/D})=1$, then there exist integers $y_1$ and $y_2$ with $y_1y_2=y$ and
\begin{align*}
y_1^q&=\frac{(x^D)^{n/D}-1}{x^D-1},\\
y_2^q&=\frac{x^D-1}{x-1}.
\end{align*}
\end{lemma}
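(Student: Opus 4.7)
The plan is to exhibit the factorisation
\[
y^q \;=\; \frac{x^n-1}{x-1} \;=\; A \cdot B, \qquad A := \frac{x^D-1}{x-1}, \quad B := \frac{(x^D)^{n/D}-1}{x^D-1} = \sum_{i=0}^{n/D-1} x^{Di},
\]
and to prove that $\gcd(A,B)=1$. Because $n$ is odd, both $D$ and $n/D$ are odd, and a quick sign check (cases $x>1$ and $x\leq -2$) shows $A,B>0$ regardless of the sign of $x$. Coprimality together with $AB = y^q$ then forces each of $A$ and $B$ to be the $q$-th power of a positive integer; setting $y_2^q = A$ and $y_1^q = B$, with the sign absorbed so that $y_1y_2 = y$, yields the conclusion.

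The substantive step is the coprimality claim, which I would prove by contradiction. Suppose a prime $p$ divides both $A$ and $B$. Note first $p \nmid x$, since otherwise $A \equiv 1 \pmod p$. I then split on whether $p \mid x-1$. If $p \mid x-1$, reducing directly modulo $p$ gives $A \equiv D \pmod p$ and $B \equiv n/D \pmod p$, so $p \mid \gcd(D,n/D) = 1$, a contradiction.

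In the remaining case $p \nmid x-1$, let $d = \mathrm{ord}_p(x)$, so $d > 1$. From $(x-1)A = x^D - 1$ and $p \mid A$ we deduce $x^D \equiv 1 \pmod p$, hence $d \mid D$. Reducing $B = \sum_{i=0}^{n/D-1} x^{Di}$ modulo $p$ with $x^D \equiv 1$ yields $B \equiv n/D \pmod p$, so $p \mid n/D$. Therefore $p$ divides the squarefree part $G(n/D)$, and consequently $p-1$ divides $\phi(G(n/D)) = Q_{n/D}$. Since also $d \mid p-1$ by Fermat's little theorem, we get $d \mid \gcd(D, Q_{n/D}) = 1$, contradicting $d > 1$. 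The main obstacle is simply organising this order-of-$x$ bookkeeping correctly so that both hypotheses $(D,n/D)=1$ and $(D,Q_{n/D})=1$ are used in the right cases; once set up, the argument is short and entirely elementary.
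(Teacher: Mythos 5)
The paper does not prove this lemma; it is stated with a citation to Shorey~\cite[Lemma 7]{Sho}, so there is no in-paper proof to compare against. Your argument is correct and complete: the factorisation $y^q = AB$ with $A = (x^D-1)/(x-1)$ and $B = (x^n-1)/(x^D-1)$, the three-way case split on a common prime $p$ (namely $p\mid x$, $p\mid x-1$, and $p\nmid x(x-1)$), and the order-of-$x$ bookkeeping in the last case — where $d=\ord_p(x)>1$, $d\mid D$, $p\mid n/D$ forces $p-1\mid Q_{n/D}$, and Fermat gives $d\mid p-1$, yielding $d\mid\gcd(D,Q_{n/D})=1$ — is exactly the standard route and uses each hypothesis where it is needed. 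The sign analysis (both $A,B>0$ because $D$ and $n/D$ are odd) and the absorption of the sign into $y_1$ are also handled correctly, so coprimality plus unique factorisation legitimately upgrades $A$ and $B$ to $q$-th powers. This is a clean, self-contained proof of a fact the paper only quotes.
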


We use this to prove the following 
\begin{lemma}
\label{bonzo}
Let $(x,y,n,q)$ be a solution of \eqref{NL} with $n$ odd and write
$$
n = p_1^{\alpha_1} \cdots p_k^{\alpha_k},
$$
where $\alpha_i \in \mathbb{N}$ and the $p_i$  are primes with
$$
p_1 < p_2 < \cdots < p_k.
$$
Then, for each $1 \leq s \leq k$ and $0 \leq t \leq \alpha_s-1$, writing
$$
M_{s,t} = p_s^{t} \, p_{s+1}^{\alpha_{s+1}} \cdots p_k^{\alpha_k},
$$
there exists an integer $y_{s,t}$ and $\delta_s \in \{ 0, 1 \}$ such that we have
\begin{equation} \label{lump}
\frac{x^{p_s M_{s,t}}-1}{x^{M_{s,t}}-1} = p_s^{\delta_s} \,  y_{s,t}^q. \; \; 
\end{equation}
If $\delta_s =1$, then $q \mid \alpha_s$.

\end{lemma}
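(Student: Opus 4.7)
The plan is to combine a cascade of applications of Lemma~\ref{Sholem} with a cyclotomic analysis of the resulting prime-power sub-equations. For each $s \in \{1, \dots, k\}$, set $N_s = \prod_{j > s} p_j^{\alpha_j}$ (with $N_k = 1$) and $P_s = \prod_{j \leq s} p_j^{\alpha_j}$, so that $n = P_s \cdot N_s$. My first goal is to produce, for each $s$, an integer $Y_s$ satisfying
$$
Y_s^q = \frac{(x^{N_s})^{p_s^{\alpha_s}} - 1}{x^{N_s} - 1},
$$
and then to decompose this prime-power Nagell-Ljunggren equation layer by layer to obtain \eqref{lump}.

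To produce the $Y_s$, I iterate Lemma~\ref{Sholem}, peeling off at each stage the largest prime-power in the remaining exponent. Starting from $y^q = (x^n - 1)/(x - 1)$, the choice $D = p_k^{\alpha_k}$ satisfies $(D, n/D) = 1$ and $(D, Q_{n/D}) = \gcd(p_k^{\alpha_k}, \prod_{i<k}(p_i - 1)) = 1$, since $p_k$ strictly exceeds every $p_i - 1$. Lemma~\ref{Sholem} thus yields $Y_k$ together with a residual $q$-th power
$$
W_{k-1}^q = \frac{X_{k-1}^{P_{k-1}} - 1}{X_{k-1} - 1},\qquad X_{k-1} = x^{N_{k-1}},
$$
itself in Nagell-Ljunggren form. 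Reapplying Lemma~\ref{Sholem} to this equation with $D = p_{k-1}^{\alpha_{k-1}}$ (whose gcd hypotheses hold by the same reasoning, since $p_{k-1}$ is now the largest prime of $P_{k-1}$), and continuing inductively down to $s = 1$, produces all of the $Y_s$.

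For each $s$, the cyclotomic factorization gives $Y_s^q = \prod_{t=0}^{\alpha_s - 1} A_{s,t}$, where
$$
A_{s,t} \;:=\; \frac{X_s^{p_s^{t+1}} - 1}{X_s^{p_s^t} - 1} \;=\; \Phi_{p_s^{t+1}}(X_s), \qquad X_s = x^{N_s}.
$$
A direct Lifting-the-Exponent calculation shows $v_{p_s}(A_{s,t}) \in \{0, 1\}$, equal to $1$ precisely when $p_s \mid x^{N_s} - 1$ --- a condition depending only on $s$. Setting $\delta_s$ to this common valuation, the classical formula for the resultant of $\Phi_{p^i}$ and $\Phi_{p^j}$ (a power of $p$) combined with $v_{p_s}(A_{s,t}) \leq 1$ yields $\gcd(A_{s,t}, A_{s,t'}) = p_s^{\delta_s}$ for $t \neq t'$. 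If $\delta_s = 0$, the $A_{s,t}$ are pairwise coprime with product $Y_s^q$, so each is a $q$-th power $y_{s,t}^q$; if $\delta_s = 1$, summing $p_s$-valuations across $\prod_t A_{s,t} = Y_s^q$ gives $\alpha_s = q \cdot v_{p_s}(Y_s)$, so $q \mid \alpha_s$, and the quotients $A_{s,t}/p_s$ are then pairwise coprime with product a $q$-th power, yielding $A_{s,t} = p_s y_{s,t}^q$. Since $X_s^{p_s^{t+1}} = x^{p_s M_{s,t}}$ and $X_s^{p_s^t} = x^{M_{s,t}}$, this is precisely the identity \eqref{lump}. The main bookkeeping hurdle is checking that each iterate of Lemma~\ref{Sholem} is applied to a bona fide Nagell-Ljunggren solution --- the inequalities $|X_s|, |W_s| > 1$ and $P_s > 2$ are automatic from $|x| > 1$ and $p_1 \geq 3$, and positivity of the $A_{s,t}$, needed to extract an integer $q$-th root, follows since $p_s$ and all exponents $p_s^t$ are odd.
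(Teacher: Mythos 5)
Your proof is correct, and its skeleton matches the paper's: both rely on Lemma~\ref{Sholem} to split the exponent $n$ into prime-power Nagell--Ljunggren sub-equations, and both then decompose each prime-power equation layer by layer via the factors $\frac{X^{p_s^{t+1}}-1}{X^{p_s^t}-1}$, concluding that each is $p_s^{\delta_s}$ times a $q$th power and that $\delta_s=1$ forces $q\mid\alpha_s$. The differences are in execution. You iterate Lemma~\ref{Sholem} top-down, taking $D = p_k^{\alpha_k}$, then $p_{k-1}^{\alpha_{k-1}}$, and so on; the paper applies it a single time with $D = n/p_1^{\alpha_1}$ and then inducts on $\omega(n)$, invoking the inductive hypothesis on the residual equation $\frac{x^D-1}{x-1}=Z^q$. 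These are equivalent bookkeeping schemes, and in both the gcd condition $(D,Q_{n/D})=1$ holds for the same reason: the primes are increasing, so no $p_s$ can divide $p_i-1$ for $i<s$. For the layer analysis, you invoke lifting-the-exponent and the resultant of prime-power cyclotomic polynomials; the paper instead uses the elementary fact that $\gcd\bigl(\frac{z^m-1}{z-1},z-1\bigr)$ divides $m$ together with a direct mod-$p_s^2$ computation, which yields the same conclusions ($p_s$-adic valuation in $\{0,1\}$ and pairwise near-coprimality of the factors) with less imported machinery. One small plus on your side: you explicitly note that the cyclotomic factors are positive (because $p_s$ and $p_s^t$ are odd), which is needed to extract integer $q$th roots and is left implicit in the paper.
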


\begin{proof}

Observe that, by assumption,
\begin{equation} \label{munch}
\prod_{s=1}^k \prod_{t=0}^{\alpha_s-1} \frac{x^{p_s M_{s,t}}-1}{x^{M_{s,t}}-1} =   y^q.
\end{equation}
Let us suppose first that $k=1$.
Since, given integers $|z|>1$ and $m \geq 1$, we have that
$$
\gcd \left( \frac{z^m-1}{z-1}, z-1 \right) \; \mbox{ divides } \;  m,
$$
it follows, writing $X=x^{M_{1,t}}=x^{p_1^t}$ for $t \in \{0, 1, \ldots, \alpha_1-1 \}$, that
$$
\gcd \left( \frac{X^{p_1}-1}{X-1}, X-1 \right) \in \{ 1, p_1 \}.
$$  
Moreover, if $X \equiv 1 \mod{p_1}$, say $X-1=p_1a$ with $a\in\mathbb{Z}$, then
\begin{align*}
X^{p_1-1}+X^{p_1-2}+\cdots +1&= (1+p_1a)^{p_1-1}+(1+p_1a)^{p_1-2}+\cdots +1\\
&\equiv p_1+\sum_{i=1}^{p_1-1}ip_1a\mod{p_1^2}\\
&\equiv p_1+a(p_1-1)p_1^2/2 \equiv p_1 \mod{p_1^2},\\
\end{align*}
and, in particular, repeatedly applying Fermat's Little Theorem, we have 
$$
\ord_{p_1} \left(  \frac{x^{p_1^{t+1}}-1}{x^{p_1^t}-1} \right)  =
\left\{
\begin{array}{cl}
 1 & \mbox{ if } x \equiv 1 \mod{p_1} \\
 0 & \mbox{ otherwise,} \\
 \end{array}
 \right.
$$
for each $0 \leq t \leq \alpha_1-1$. There thus exist integers $y_t$, again for $0 \leq t \leq \alpha_1-1$, such that
$$
 \frac{x^{p_1^{t+1}}-1}{x^{p_1^t}-1} = p_1^\delta \, y_t^q,
 $$
 where 
 $$
 \delta = 
 \left\{
\begin{array}{cl}
 1 & \mbox{ if } x \equiv 1 \mod{p_1} \\
 0 & \mbox{ otherwise.} \\
 \end{array}
 \right.
$$
Since, from (\ref{munch}), we have
$$
\alpha_1 \delta = \ord_{p_1}  \prod_{t=0}^{\alpha_1-1} \frac{x^{p_1^{t+1}}-1}{x^{p_1^t}-1} = q \ord_{p_1} y  \equiv 0 \mod{q},
$$
the desired result follows.

Suppose now that the stated conclusion is true for solutions to \eqref{NL} with $\omega(n)=k-1$, where $k \geq 2$. Assuming that $(x,y,n,q)$ is a solution of \eqref{NL} with $n$ odd and $\omega (n)=k$, say,
$$
n = p_1^{\alpha_1} \cdots p_k^{\alpha_k},
$$
we may thus apply Lemma \ref{Sholem} with $D= n/p_1^{\alpha_1}$ to deduce the existence of  integers $Z$ and $W$  such that
\begin{equation} \label{rup}
\frac{X-1}{x-1} = Z^q \; \mbox{ and } \; \frac{X^{p_1^{\alpha_1}}-1}{X-1} = W^q,
\end{equation}
where $X=x^{n/p_1^{\alpha_1}}$.
As previously, we find that
$$
q \ord_{p_1} W = \ord_{p_1}  \prod_{t=0}^{\alpha_1-1} \frac{X^{p_1^{t+1}}-1}{X^{p_1^t}-1}  
=
\left\{
\begin{array}{cl}
 \alpha_1 & \mbox{ if } X \equiv 1 \mod{p_1} \\
 0 & \mbox{ otherwise,} \\
 \end{array}
 \right.
$$
whereby we have (\ref{lump}) for $s=1$. Applying our inductive hypothesis to the first equation in (\ref{rup}) (where we have $\omega (n/p_1^{\alpha_1})=k-1$), we conclude as desired.
\end{proof}

%-----------------------------------------------
\section{Proof of Theorem \ref{good}}
%-----------------------------------------------

The idea of the proof is to apply Runge's method to an appropriate curve.  The classical Runge's method is used to bound integer points on superelliptic curves of the form $y^q=f(x)$, where $f\in\mathbb{Z}[x]$ is monic and $q \mid \deg f$.  In order to construct a situation where this version of Runge's method applies, we will consider a product of powers of $\frac{x^r-1}{x-1}$ and $\frac{x^s-1}{x-1}$ to obtain a polynomial $f$ of degree divisible by $q$ and then examine integer solutions to $w^q=f(x)$.

Let $q,r,s,x,y,z\in\mathbb{Z}$ be a solution to \eqref{eq1} and \eqref{eq2}, as in the statement of the theorem.  We may assume throughout that $|x|>2$. Note that the main result of Bennett \cite{Benn} implies that the Diophantine equation
$$
x^{t} X^q - (x-1) Y^q = 1
$$
has, if $t=1$, precisely the solution $X=Y=1$ in nonzero integers $X$ and $Y$, and, for a fixed positive integer $2 \leq t \leq q-1$, at most a single nonzero solution $X, Y$. Choosing integers $0 \leq r_0, s_0 < q$ such that $r \equiv r_0 \mod{q}$ and $s \equiv s_0 \mod{q}$ and rewriting (\ref{eq1}) and (\ref{eq2}) as
$$
x^{r_0} \left( x^{(r-r_0)/q} \right)^q - (x-1) y^q = 1 \; \mbox{ and } \;
x^{s_0} \left( x^{(s-s_0)/q} \right)^q - (x-1) z^q = 1,
$$
we may thus conclude that $1, r$ and $s$ are pairwise incongruent modulo $q$ (whereby, since Proposition \ref{summ} implies  that $3 \nmid rs$, $q \geq 5$). In particular, since $r \not\equiv 1 \mod{q}$ and $s  \not\equiv 1 \mod{q}$, we may define an integer $a$ with $0 < a < q$, via
$$
a \equiv -(r-1)^{-1}(s-1)\mod{q}.
$$
Since $r \not\equiv s \mod{q}$, it follows that $a \neq q-1$, whence $1 \leq a \leq q-2$. Let us now define
$$
N= a(r-1) +s-1,
$$
so that
$$
N \equiv  -(r-1)^{-1}(s-1) (r-1) + (s-1) \equiv 0 \mod{q},
$$
whereby $N/q$ is an integer.

 We consider the equation
\begin{align}
\label{weq}
w^q=\left(\frac{x^r-1}{x-1}\right)^a\left(\frac{x^s-1}{x-1}\right),
\end{align}
where now Runge's condition is satisfied.
This equation has the integer solution $(w,x)=(y^az,x)$.  Consider the Laurent series expansion
\begin{align*}
&\left[\left(\frac{x^r-1}{x-1}\right)^a\left(\frac{x^s-1}{x-1}\right)\right]^{\frac{1}{q}}=(x^r-1)^{a/q}(x^s-1)^{1/q}(x-1)^{(-a-1)/q}\\
&=x^{N/q}\left(1-\frac{1}{x^r}\right)^{a/q}\left(1-\frac{1}{x^s}\right)^{1/q}\left(1-\frac{1}{x}\right)^{(-a-1)/q}\\
&=x^{N/q}\left(\sum_{i=0}^\infty (-1)^i\binom{a/q}{i}x^{-ri}\right)\left(\sum_{j=0}^\infty (-1)^j\binom{1/q}{j}x^{-sj}\right)\left(\sum_{k=0}^\infty (-1)^k\binom{-(a+1)/q}{k}x^{-k}\right)\\
&=x^{N/q}\sum_{n=0}^\infty a_nx^{-n},
\end{align*}
where 
\begin{align*}
a_n=\sum_{\substack{ri+sj+k=n\\i,j,k\geq 0}}\binom{a/q}{i}\binom{1/q}{j}\binom{-(a+1)/q}{k}(-1)^{i+j+k}.
\end{align*}
Note that the series converges if $|x|>1$.  It follows that, for any solution $(w,x)$ of \eqref{weq} with $|x|>1$,
\begin{align*}
w=\zeta x^{N/q}\sum_{n=0}^\infty a_nx^{-n},
\end{align*}
for some $q$th root of unity $\zeta$.  In our case, $w$ and $x$ are both real, so we must have
\begin{equation} \label{bubs}
w=\pm x^{N/q}\sum_{n=0}^\infty a_nx^{-n}.
\end{equation}
Before we proceed further, we need to understand the coefficients $a_n$ somewhat better.

\begin{lemma}
\label{vlem}
Let $n$ be a nonnegative integer. Then we have
\begin{align}
\ord_q a_n&=-n-\ord_q(n!),\label{ordan}\\
 q^{n+\left[\frac{n}{q-1}\right]}a_n&\in \mathbb{Z},\label{one}
\end{align}
and
\begin{equation} \label{two}
|a_n|\leq (\left[ n/r\right]+1)(\left[ n/s\right]+1).
\end{equation}
\end{lemma}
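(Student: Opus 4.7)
\emph{Proof plan.} Each of \eqref{ordan}--\eqref{two} is a direct statement about the three generalized binomial coefficients $\binom{a/q}{i}$, $\binom{1/q}{j}$, $\binom{-(a+1)/q}{k}$, and will follow from their explicit forms. The relevant inputs from the set-up preceding the lemma are: $q$ is prime with $1 \leq a \leq q-2$, so each of $a$, $1$, $a+1$ is coprime to $q$; and $r, s \geq 3$.

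For \eqref{ordan}, I would begin by writing each binomial in one of the forms
$$
\binom{\alpha}{m} = \frac{1}{q^m\, m!}\prod_{j=0}^{m-1}(c - jq) \qquad\text{or}\qquad \frac{(-1)^m}{q^m\, m!}\prod_{j=0}^{m-1}(c + jq),
$$
where $c$ is the relevant element of $\{a,1,a+1\}$. Since each such $c$ is coprime to the prime $q$, every numerator factor is a $q$-adic unit, so $\ord_q\binom{\alpha}{m} = -m - \ord_q(m!)$. The summand in $a_n$ indexed by $(i,j,k)$ thus has $q$-adic valuation exactly $-(i+j+k) - \ord_q(i!\,j!\,k!)$. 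I would then chain together
$$
i+j+k \leq ri+sj+k = n, \qquad \ord_q(i!\,j!\,k!) \leq \ord_q((i+j+k)!) \leq \ord_q(n!),
$$
the first by $r,s\geq 1$, the second by integrality of the multinomial coefficient $\binom{i+j+k}{i,j,k}$, and the third by monotonicity of $\ord_q(\cdot!)$. Because $r, s \geq 3 > 1$, equality in the first chain forces $(i,j,k) = (0,0,n)$, so that summand is the strictly unique minimizer of the $q$-adic valuation; no cancellation can occur and \eqref{ordan} follows.

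For \eqref{one}, it suffices to show $a_n$ is $p$-integral for every prime $p \neq q$, for then combining this with \eqref{ordan} and Legendre's formula $\ord_q(n!) \leq n/(q-1)$ (so $\ord_q(n!) \leq \lfloor n/(q-1) \rfloor$ by integrality) immediately gives the desired bound. But for $p \neq q$, each of $a/q$, $1/q$, $-(a+1)/q$ lies in $\mathbb{Z}_p$, and the classical fact that the integer-valued polynomial $\binom{x}{m}$ extends by continuity to a map $\mathbb{Z}_p \to \mathbb{Z}_p$ then gives $p$-integrality of each binomial factor, and hence of each summand of $a_n$.

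For \eqref{two}, I would note that every argument $\alpha$ in question satisfies $|\alpha|<1$, so for every $j \geq 0$ one has $|\alpha - j| \leq j + 1$, and therefore $|\binom{\alpha}{m}| \leq 1$ termwise. The modulus of $a_n$ is thus at most the number of triples $(i,j,k)$ of nonnegative integers with $ri+sj+k=n$; since $k$ is determined by $(i,j)$ subject to $0 \leq i \leq \lfloor n/r\rfloor$ and $0 \leq j \leq \lfloor n/s\rfloor$, this count is at most $(\lfloor n/r\rfloor + 1)(\lfloor n/s\rfloor + 1)$, as claimed. The only point in the entire argument with any subtlety is the uniqueness of the minimizer in \eqref{ordan}, which is what rules out accidental cancellation in the sum defining $a_n$; the hypothesis $r, s \geq 3$ makes this uniqueness automatic, reducing the whole lemma to a bookkeeping exercise.
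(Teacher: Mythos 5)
Your proposal is correct and follows essentially the same route as the paper: compute $\ord_q$ of each generalized binomial coefficient explicitly, exhibit $(i,j,k)=(0,0,n)$ as the strict valuation-minimizer among the summands of $a_n$ (the paper phrases this by showing the valuation gap is $\geq r-1$ rather than merely positive, but the mechanism is the same), combine with $\ord_q(n!)\leq\lfloor n/(q-1)\rfloor$ for \eqref{one}, and for \eqref{two} bound each factor by $1$ and count lattice points on $ri+sj+k=n$. Your proof is a touch more scrupulous than the paper's in that you explicitly note the $p$-integrality of $a_n$ for primes $p\neq q$, a step the paper leaves implicit when passing from the $q$-adic valuation to $q^{n+\ord_q(n!)}a_n\in\mathbb{Z}$.
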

\begin{proof}
If $l$ is a nonnegative integer and $m$ is an integer, coprime to  a given prime $q$, then we have
$$
\ord_q \binom{m/q}{l} = -l - \ord_q (l!).
$$
Let $n\geq 0$ and suppose that $i, j$ and $k$ are nonnegative integers  for which $ri+sj+k=n$.  We thus have
$$
\ord_q \left(\binom{a/q}{i}\binom{1/q}{j}\binom{-(a+1)/q}{k}\right)=
-i-j-k - \ord_q (i! j! k! ).
$$
 It follows, if $i$ and $j$ are nonnegative integers, not both zero, that
 $$
 \begin{array}{c}
- \ord_q \binom{-(a+1)/q}{n} + \ord_q \left(\binom{a/q}{i}\binom{1/q}{j}\binom{-(a+1)/q}{k}\right) \\
 = (r-1)i + (s-1) j + \ord_q \left( \frac{(ri+sj+k)!}{i!j!k!} \right) \geq r-1,
 \end{array}
 $$
 since $\frac{(ri+sj+k)!}{i!j!k!}$ is an integer. We thus have
 
\begin{equation*}
\ord_q a_n=\ord_q \left(\binom{-(a+1)/q}{n}(-1)^{n}\right)=-n-\ord_q(n!),
\end{equation*}
whereby
$$
q^{n+ \ord_q (n!)} a_n \in \mathbb{Z}.
$$
Statement (\ref{one}) follows upon observing that
$$
\ord_q(n!)=\sum_{k=1}^\infty \left[\frac{n}{q^k}\right]\leq \left[\frac{n}{q-1}\right].
$$

To prove inequality (\ref{two}), recall that $1 \leq a \leq q-2$ and apply the easy fact that $|c|\leq q$ implies $|\binom{c/q}{i}|\leq 1$ for all $i\geq 0$. 
\end{proof}

Let us now define
$$
P(x)=q^{N/q + \left[\frac{N}{q(q-1)}\right]}x^{N/q}\sum_{n=0}^{N/q} a_nx^{-n}.
$$
Note that, by Lemma \ref{vlem}, $P(x) \in\mathbb{Z}[x]$ and, since $\ord_q a_{n+1}<\ord_q a_n\leq 0$ for all $n$,
$$
q^{N/q+ \left[\frac{N}{q(q-1)}\right]}w \mp P(x) \neq 0.
$$

Let $m=N/q+1$.  Then from our definitions, Lemma \ref{vlem} and (\ref{bubs}),
$$
\begin{array}{l}
\left|q^{N/q+\left[\frac{N}{q(q-1)}\right]}w \mp P(x)\right|= q^{N/q+\left[\frac{N}{q(q-1)}\right]}\left|x^{N/q}\sum_{n=m}^{\infty} a_nx^{-n}\right|\\ \\
\leq q^{N/(q-1)}|x|^{N/q}\sum_{n=m}^{\infty} (\left[ n/r\right]+1)(\left[ n/s\right]+1)|x|^{-n}. \\
\end{array}
$$
Since we have that
$$
\frac{(\left[ (n+1)/r\right]+1)(\left[ (n+1)/s\right]+1)}{(\left[ n/r\right]+1)(\left[ n/s\right]+1)} \leq 3,
$$
for $n \geq m \geq 2$ and $s > r \geq 3$, where the inequality is sharp (corresponding to $(r,s,n)=(r,2r,2r-1)$),
it follows that the positive integer $\left|q^{N/q+\left[\frac{N}{q(q-1)}\right]}w \mp P(x)\right|$ is bounded above by
$$
\begin{array}{l}
q^{N/(q-1)}([m/r]+1)([m/s]+1)|x|^{-1}\sum_{n=0}^{\infty} 3^n|x|^{-n}\\ \\
=q^{N/(q-1)}([m/r]+1)([m/s]+1)\frac{1}{|x|-3}
\end{array}
$$
and hence
\begin{align*}
|x|<3+q^{N/(q-1)}([m/r]+1)([m/s]+1).
\end{align*}
From
$$
N= a(r-1) +s-1 \leq (q-2)(r-1)+s-1,
$$
we find that 
$$
\frac{m}{r} <1+\frac{s}{qr} \; \mbox{ and } \; 
\frac{m}{s}<1,
$$
whereby the result follows.
%To prove inequality (\ref{funineq2}), we assume $r=p$ and $s=p^2$, so that 
%$$
%a= \left( \left[ \frac{p}{q} \right] +1 \right) q - p - 1,
%$$
%whereby
%$$
%N= \left( \left[ \frac{p}{q} \right] +1 \right) (p-1) q.
%$$
%If $p < q$, we have $N=(p-1)q$ and $m=p$ whereby, arguing as above,
%$$
%|x| < 2 + 2 (p+1) q^{p-1}.
%$$

%-----------------------------------------------
\section{Proof of Theorem \ref{good2}}
%-----------------------------------------------

Let $x,y,z,u,p$ and $q$ be as in the statement of Theorem \ref{good2}.  We choose $a$ and $b$ to be the smallest nonnegative integers satisfying 
$$
a \equiv 2p+1 \mod{q} \; \mbox{ and } \; b \equiv -p-2 \mod{q}.
$$
Then we have, writing $N=a(p-1)+b(p^2-1)+(p^3-1)$, that
$$
N \equiv  a+ 2b + 3 \equiv 0 \mod{q}
$$
and hence
\begin{equation} \label{herc}
\left(\frac{x^p-1}{x-1}\right)^a \, \left(\frac{x^{p^2}-1}{x-1}\right)^b \left(\frac{x^{p^3}-1}{x-1}\right) = w^q,
\end{equation}
for some integer $w$, where the left hand side is a polynomial in $x$ with degree divisible by $q$.
We have 
\begin{align*}
& \left[\left(\frac{x^p-1}{x-1}\right)^a \, \left(\frac{x^{p^2}-1}{x-1}\right)^b \left(\frac{x^{p^3}-1}{x-1}\right) \right]^{\frac{1}{q}} \\
&=(x^p -1)^{a/q}(x^{p^2}-1)^{b/q}(x^{p^3}-1)^{1/q} (x-1)^{(-a-b-1)/q}\\
&=x^{N/q} \left(1-\frac{1}{x^p}\right)^{a/q} \left(1-\frac{1}{x^{p^2}}\right)^{b/q} \left(1-\frac{1}{x^{p^3}}\right)^{1/q}\left(1-\frac{1}{x}\right)^{(-a-b-1)/q}\\
&=x^{N/q}\sum_{n=0}^\infty b_nx^{-n},
\end{align*}
where $\sum_{n=0}^\infty b_nx^{-n}$ is the product of the four series
$$
\sum_{i=0}^\infty (-1)^i\binom{a/q}{i}x^{-pi}, \; \; \sum_{j=0}^\infty (-1)^j\binom{b/q}{j}x^{-p^2j}, \;  \; 
\sum_{k=0}^\infty (-1)^k\binom{1/q}{k}x^{-p^3k}
$$
and
$$
\sum_{l=0}^\infty(-1)^l \binom{-(a+b+1)/q}{l}x^{-l}.
$$
We may thus write
\begin{align*}
b_n=\sum_{\substack{pi+p^2j+p^3k+l=n\\i,j,k,l\geq 0}}   \binom{a/q}{i}
\binom{b/q}{j} \binom{1/q}{k}\ \binom{-(a+b+1)/q}{l}(-1)^{i+j+k+l}.
\end{align*}
Note here that we have $0 \leq a, b \leq q-1$, so that $a+b+1 \leq 2q-1$. It follows that
$$
\left| \binom{-(a+b+1)/q}{l} \right| < l+1
$$
and hence
\begin{equation} \label{bee-big}
|b_n| < \left( \left[ \frac{n}{p} \right] +1 \right)  \left( \left[ \frac{n}{p^2} \right] +1 \right)  \left( \left[ \frac{n}{p^3} \right] +1 \right)
(n+1).
\end{equation}
The series $\sum_{n=0}^\infty b_nx^{-n}$ thus converges if $|x|>1$ and hence, for any real solution $(w,x)$ of \eqref{herc} with $|x|>1$, we can write
\begin{equation*}
w=\pm x^{N/q}\sum_{n=0}^\infty b_nx^{-n}.
\end{equation*} 
Arguing as in the proof of Lemma \ref{vlem}, we find that
$$
\ord_q \left(\binom{a/q}{i}
\binom{b/q}{j} \binom{1/q}{k}\ \binom{-(a+b+1)/q}{l}\right)\geq
-i-j-k -l - \ord_q (i! j! k! l!).
$$
We assume now that $q\nmid (a+b+1)$ (or equivalently, $p\neq q$).  It will be clear from our argument that one obtains even stronger bounds when $q\mid (a+b+1)$ (note that in this case, $(x^p-1)^a(x^{p^2}-1)^b(x^{p^3}-1)$ is a perfect $q$th power).  Then, assuming $q\nmid (a+b+1)$, after a little work, we conclude that
\begin{equation*}
\ord_q b_n=\ord_q \binom{-(a+b+1)/q}{n}=-n-\ord_q(n!),
\end{equation*}
whence $b_n \neq 0$ for each $n$ and 
$$
q^{n+ \ord_q (n!)} b_n \in \mathbb{Z}.
$$
We thus have
\begin{equation*}
q^{n+\left[\frac{n}{q-1}\right]}b_n\in \mathbb{Z},
\end{equation*}
for each $n \geq 0$. Define
$$
P(x)=q^{N/q+ \left[\frac{N}{q(q-1)}\right]}x^{N/q}\sum_{n=0}^{N/q} b_nx^{-n}.
$$
We have that $P(x) \in\mathbb{Z}[x]$ and, since $\ord_q b_{n+1}<\ord_q b_n\leq 0$ for all $n$,
$$
q^{N/q+ \left[\frac{N}{q(q-1)}\right]}w \mp P(x)\neq 0.
$$

Setting as before $m=N/q+1$, we thus have  that
$$
|q^{N/q+\left[\frac{N}{q(q-1)}\right]}w \mp P(x)| = q^{N/q+\left[\frac{N}{q(q-1)}\right]}\left|x^{N/q}\sum_{n=m}^{\infty} b_nx^{-n}\right|
$$
is a positive integer. On the other hand, via (\ref{bee-big}), the right hand side here is bounded above by
$$
q^{N/(q-1)}|x|^{N/q}\sum_{n=m}^{\infty} (\left[ n/p\right]+1)(\left[ n/p^2\right]+1) (\left[ n/p^3\right]+1) (n+1) |x|^{-n}
$$
which is in turn at most
$$
q^{N/(q-1)}([m/p]+1)([m/p^2]+1)([m/p^3]+1)(m+1) |x|^{-1}\sum_{n=0}^{\infty} (7/2)^n|x|^{-n}.
$$
Here, we have used that
$$
\frac{(\left[ (n+1)/p\right]+1)(\left[ (n+1)/p^2\right]+1) (\left[ (n+1)/p^3\right]+1) (n+2)}{(\left[ n/p\right]+1)(\left[ n/p^2\right]+1) (\left[ n/p^3\right]+1) (n+1)} \leq \frac{2240}{729} < 7/2,
$$
with equality in the first of these inequalities corresponding to $(n,p)=(26,3)$.
It follows that
$$
1 \leq q^{N/(q-1)}([m/p]+1)([m/p^2]+1)([m/p^3]+1)(m+1) \frac{1}{|x|-7/2},
$$
whereby
\begin{align*}
|x|<7/2+q^{N/(q-1)}([m/p]+1)([m/p^2]+1)([m/p^3]+1)(m+1).
\end{align*}
If we now use the fact that 
$$
N \leq (q-1)(p-1)+(q-1)(p^2-1)+p^3-1 = (q-1) (p^2+p-2) + p^3-1,
$$
we find that $m<p^3$ and
\begin{align*}
m<
\begin{cases}
2p^2+p-1 \quad &\text{if $q\geq p$},\\
2 p^3/q-1 \quad &\text{if $q<p$}.
\end{cases}
\end{align*}

We conclude easily that
$$
|x| < \frac{7}{2}+ \frac{49}{3}p^3\left(\max\left\{1, \frac{p}{q} \right\}\right)^3q^{p^2+p-2+(p^3-1)/(q-1)},
$$
and the result follows.

%-----------------------------------------------
\section{Proof of Theorem \ref{mtheorem}}
%-----------------------------------------------

We now prove Theorem \ref{mtheorem}. Let $(x,y,n,q)$ be a solution of \eqref{NL} with $n$ odd and $\Omega(n)\geq 4$.  
As in Lemma \ref{bonzo}, we write
$$
n = p_1^{\alpha_1} \cdots p_k^{\alpha_k} 
$$
where $\alpha_i \in \mathbb{N}$ and the $p_i$ are primes with
$$
p_1 < p_2 < \cdots < p_k.
$$
Note that $k=\omega(n)$, $\sum_{i=1}^k \alpha_i=\Omega(n)$, and that, by assumption, equation (\ref{munch}) holds.

Define $\delta_s$ as in Lemma \ref{bonzo}, for $1 \leq s \leq k$. We begin by supposing that $\delta_s=0$ for each value of $s$. Let us set 
$$
p = 
\left\{
\begin{array}{cc}
p_1 & \mbox{ if } \alpha_1 \geq 2, \\
p_2 & \mbox{ if } \alpha_1 = 1, \\
\end{array}
\right.
$$
and define the positive integer $m$ via $n=m p_1 p$.
From Lemma \ref{bonzo},  we may find integers $y_1$ and $y_2$ for which
$$
\frac{x^{n}-1}{x^{m}-1} = (y_1 y_2)^q \; \mbox{ and } \; 
 \frac{x^{mp}-1}{x^{m}-1} = y_2^q.
$$
Applying Theorem \ref{good} with $r=p$, $s=p_1p$, and using the fact that $|x| \geq 2q+1$, we thus have
$$
2^{m} q^{m}<
% (2q+1)^{m} \leq 2 + p_1^{-1} (q+p_1)(q+2p_1)(q+p_1p) q^{p-1 + \frac{p_1 p -1}{q-1}}.
4 \,\, \max \left\{ 1, \frac{p_1}{q} \right\}  \; q^{p-1 + \frac{p (p_1-1)}{q-1}}.
$$
Since $q \geq 3$, $p \geq p_1 \geq 5$ and $\Omega (n) \geq 4$ (whence $m \geq p^2$),
it follows that
$$
2^{p^2} q^{p^2} \leq 2^m q^m <   p \, q^{p + \frac{p ( p -1)}{q-1}}
$$
and so
$$
2^{p^2} q^{p(p-1)(q-2)/(q-1)} <  p,
$$
an immediate contradiction.

 Next, let us suppose that $\delta_s=1$ for at least one  value of $s$. Choosing $s$ to be the smallest such index, we thus have that $q \mid \alpha_s$ and, writing 
$$
X=x^\beta \; \; \mbox{ where } \; \; \beta = \prod_{j > s} p_j^{\alpha_j}, 
$$
and applying Lemma \ref{bonzo},
$$
\frac{X^{p_s^{\alpha_s-i}}-1   }{ X  ^{p_s^{\alpha_s-i-1}}-1  } = p_s y_i^q, \; \; \mbox{ for } i = 0, 1, \ldots, \alpha_s-1
$$
and integers $y_i$. Setting $Z = X^{p_s^{\alpha_s-3}}$, we may thus write
$$
\frac{Z^{p_s^j}-1}{Z-1} = p_s^j z_j^q \; \mbox{ for } j=1, 2, 3,
$$
where the $z_j$ are integers. Theorem \ref{good2} thus implies that
\begin{equation} \label{fork}
\left| Z \right| < 17  \; p_s^3 \left( \max \left\{ 1, \frac{p_s}{q} \right\} \right)^3\, q^{p_s^2+p_s-2+(p_s^3-1)/(q-1)}.
\end{equation}
Since
$$
|Z| \geq |x|^{p_s^{\alpha_s-3}},
$$
and Proposition \ref{summ} enables us to conclude that $|x| \geq 2q+1$, we thus have
$$
2^{p_s^{\alpha_s-3}} \, q^{p_s^{\alpha_s-3}} < 17  \; p_s^3 \left( \max \left\{ 1, \frac{p_s}{q} \right\} \right)^3\, q^{p_s^2+p_s-2+(p_s^3-1)/(q-1)}.
$$
From $p_s \geq 5$ and the fact that $q \mid \alpha_s$, we reach an easy contradiction,
at least provided either $q \geq 7$, or $q \in \{ 3, 5 \}$ and $\alpha_s \neq q$. We may thus suppose that $(q,\alpha_s) = (3,3)$ or $(5,5)$ (whereby, from the fact that $\omega (n) \leq q-2$, we have $k=1$ and $k \leq 3$, respectively). We therefore have either 
$n=p^3$  for $p$ prime (so that $\Omega (n) < 4$ as desired) or
$(q,\alpha_s) = (5,5)$.
In the latter case, if $s < k$, then 
$$
|Z| =|x|^{ \beta p_s^{2}} > |x|^{ p_s^{3}} \geq 10^{4 p_s^{3}},
$$
contradicting (\ref{fork}). We may thus assume that $s=k$. We will show that necessarily $k=1$. Suppose otherwise. Then setting $X= x^{p_k^3}$ and applying Lemma \ref{bonzo}, we thus have
$$
\frac{X^{p_k} -1}{X-1} = p_k y_1^5,
$$
$$
\frac{X^{p_k^2} -1}{X-1} = p_k^2 y_2^5
$$
and
$$
\frac{X^{p_{k-1} p_k^2} -1}{X-1} = p_k^2 y_3^5,
$$
for integers $y_1, y_2$ and $y_3$.  We choose integers $a$ and $b$, with $0 \leq a, b \leq 4$,  such that
$$
a (p_k-1) + b (p_k^2-1) +  p_{k-1} p_k^2-1 \equiv 0 \mod{5}
$$
and
$$
a+2b+2 \equiv 0 \mod{5}.
$$
This is always possible as $\det  \left(
 \begin{matrix}
  p_k-1 & p_k^2-1\\
  1 & 2 
 \end{matrix}
\right)=-(p_k-1)^2
$ and $p_k\not\equiv 1\pmod{5}$ (say by combining the proof of Lemma 9 of \cite{Sho} with Theorem 1.1 of \cite{Benn}).
%Namely, we take $(a,b)=(1,1)$ if $p_{k-1} \equiv 3 \mod{5}$ and $p_k \equiv 4 \mod{5}$, or if $p_{k-1} \equiv 4 \mod{5}$ and $p_k \equiv 3 \mod{5}$, $(a,b)=(3,0)$ if $p_{k-1} \equiv 2 \mod{5}$, and $(a,b)=(4,2)$ if either $p_{k-1} \equiv 3 \mod{5}$ and $p_k \equiv 3 \mod{5}$, or if $p_{k-1} \equiv 4 \mod{5}$ and $p_k \equiv 2 \mod{5}$.

We now argue as in the proof of Theorem \ref{good2}, considering the Laurent series expansion
$$
\left[\left(\frac{X^{p_k}-1}{X-1}\right)^a \, \left(\frac{X^{p_k^2}-1}{X-1}\right)^b \left(\frac{X^{p_{k-1}p_k^2}-1}{X-1}\right) \right]^{1/5} = \, X^{N/5}\sum_{n=0}^\infty c_nX^{-n},
$$
where now
$$
N=a (p_k-1) + b (p_k^2-1) +  p_{k-1} p_k^2-1.
$$
Here, we have
\begin{align*}
c_n=\sum  \binom{a/5}{i}
\binom{b/5}{j} \binom{1/5}{l}\ \binom{-(a+b+1)/5}{m}(-1)^{i+j+l+m},
\end{align*}
where the sum is over nonnegative integers $i, j, l$ and $m$ with 
$$
p_ki+p_k^2 j + p_{k-1} p_k^2 l + m = n.
$$
We note that $a+b+1\equiv 0\pmod{5}$ easily implies that either $p_k\equiv 0\pmod{5}$ or $p_{k-1}\equiv 1\pmod{5}$, both of which are impossible (the first, since $p_{k-1}\geq 5$, and the second, from using again Theorem 1.1 of \cite{Benn}).

As previously,
\begin{align*}
\ord_5 c_n=-n-\ord_5(n!), \; \; \; 5^{n+\left[\frac{n}{4}\right]}c_n\in \mathbb{Z} \setminus \{ 0 \},
\end{align*}
for each $n \geq 0$, and
\begin{equation} \label{cee-big}
|c_n| < \left( \left[ \frac{n}{p_k} \right] +1 \right)  \left( \left[ \frac{n}{p_k^2} \right] +1 \right)  \left( \left[ \frac{n}{p_{k-1} p_k^2} \right] +1 \right)
(n+1).
\end{equation}
We have that
$$
w=p_k^{(a+2b+2)/5} y_1^a y_2^b y_3 =\pm X^{N/5}\sum_{n=0}^\infty c_nx^{-n}
$$
and hence writing
$$
P(X)=5^{N/5 + \left[ N/20 \right]}x^{N/5}\sum_{n=0}^{N/5} c_nX^{-n},
$$
that $5^{N/5+ \left[ N/20 \right]}w \mp P(X)$ is a nonzero integer.
Writing $m=N/5+1$, we thus have
$$
1 \leq 5^{N/4}|X|^{N/5}\sum_{n=m}^{\infty} (\left[ n/p_k\right]+1)(\left[ n/p_k^2\right]+1) (\left[ n/p_{k-1} p_k^2\right]+1) (n+1) |X|^{-n}.
$$
Writing $T_n=(\left[ n/p_k\right]+1)(\left[ n/p_k^2\right]+1) (\left[ n/p_{k-1} p_k^2\right]+1) (n+1)$, we have that 
$$
T_{n+1}/T_n \leq 106272/42875  < 3,
$$
provided $n \geq 3$ and $5 \leq p_{k-1} < p_k$ (with equality in the first inequality if $n=244, p_{k-1}=5$ and $p_k=7$). We thus have that
$$
1 \leq 5^{N/4}(m/p_k+1)(m/p_k^2+1)(m/p_{k-1} p_k^2+1)(m+1) |X|^{-1}\sum_{n=0}^{\infty} 3^n |X|^{-n},
$$
whence
$$
|X|<3+5^{N/4}(m/p_k+1)(m/p_k^2+1)(m/p_{k-1} p_k^2+1)(m+1).
$$
Since
$$
N \leq 4 p_k + 4 p_k^2 + p_{k-1} p_k^2-9\leq 2p_k^3-5,
$$
we have
$$
 |X| < 3 + 5^{\frac{1}{2} p_k^3} \left( \frac{2}{5} p_k^2 + 1 \right) \left( \frac{2}{5} p_k + 1 \right) 
\cdot 3 \cdot \left( \frac{2}{5} p_k^3 + 1 \right).
$$
On the other hand,
$$
|X| =|x|^{p_k^3} \geq 10^{4 p_k^3},
$$
a contradiction, since $p_k \geq 7$.
Our conclusion is thus that
\begin{equation} \label{case3}
n=p^5  \; \mbox{ for } \; p \; \mbox{ prime, } q=5 \; \mbox{ and } x \equiv 1 \mod{p}.
\end{equation}

To finish the proof of Theorem \ref{mtheorem}, it remains to treat case (\ref{case3}); we are currently unable to do so without appeal to the results of \cite{Mih}.
If we have a solution to
$$
\frac{x^{p^5}-1}{x-1} = y^5
$$
in integers $x$ and $y$ and prime $p$ with $|x|>1$ and $x \equiv 1 \mod{p}$, then
$$
\frac{x^{p^k}-1}{x^{p^{k-1}}-1} = p \, y_k^5
$$
for integers $y_k$, $1 \leq k \leq 5$. Appealing to Theorem 2 of Mih{\u{a}}ilescu \cite{Mih} and the fact that $|x| > 10^4$, it follows that
$$
10^{4 p^4} < |x|^{p^4} < 5^{10 p^2},
$$
a contradiction which completes the proof of Theorem \ref{mtheorem}.

%-----------------------------------


\begin{thebibliography}{30}
%-----------------------------------

\bibitem{Benn}
M.A. Bennett.
\newblock Rational approximation to algebraic numbers of small height: the  Diophantine equation $\vert ax\sp n-by\sp n\vert =1$. 
\newblock {\em J. Reine Angew. Math.} {\bf  535}  (2001), 1--49.

\bibitem{BHM}
Y. Bugeaud, G. Hanrot and M. Mignotte.
\newblock Sur l'\'equation diophantienne $\frac{x^n-1}{x-1}=y^q$, III.
\newblock {\em Proc. London Math. Soc.} {\bf 84} (2002), 59--78.

\bibitem{BuMi0}
Y. Bugeaud and M. Mignotte.
\newblock Sur l'\'equation diophantienne $\frac{x^n-1}{x-1}=y^q$, II.
\newblock {\em C. R. Acad. Sci. Paris SŽr. I Math.}  {\bf 328} (1999), 741--744.

\bibitem{BuMi}
Y. Bugeaud and M. Mignotte.
\newblock L'\'equation de Nagell-Ljunggren $\frac{x^n-1}{x-1} = y^q$.
\newblock {\em Enseign. Math.} {\bf 48} (2002), 147--168.

\bibitem{BuMi2}
Y. Bugeaud and M. Mignotte.
\newblock On the Diophantine equation $(x^n-1)/(x-1) = y^q$ with negative $x$.
\newblock Number theory for the millennium, I (Urbana, IL, 2000), 145--151, A K Peters, Natick, MA, 2002.

\bibitem{BMR}
Y. Bugeaud, M. Mignotte and Y. Roy.
\newblock On the Diophantine equation $(x^n-1)/(x-1)=y^q$.
\newblock {\em Pacific J. Math.} {\bf 193} (2000), 257--268.

\bibitem{BM}
Y. Bugeaud and P. Mih{\u{a}}ilescu.
\newblock On the Nagell-Ljunggren equation $\frac{x^n-1}{x-1}=y^q$.
\newblock {\em Math. Scand.} {\bf 101} (2007), 177--183.

 \bibitem{Gu}
 R. M. Guralnick.
 \newblock Subgroups of prime power index in a simple group.
 \newblock {\em J. Algebra} {\bf 81} (1983), 304--311.
 
\bibitem{Lj}
W. Ljunggren.
\newblock Noen Setninger om ubestemte likninger av formen $(x^n-1)/(x-1)=y^q$.
\newblock {\em Norsk. Mat. Tidsskr.} {\bf 25} (1943), 17--20.
 
\bibitem{Mih}

P. Mih{\u{a}}ilescu.
\newblock New bounds and conditions for the equation of Nagell-Ljunggren.
\newblock {\em J. Number Theory} {\bf 124} (2007), no.~2, 380--395. 

\bibitem{Mih2}

 P. Mih{\u{a}}ilescu.
\newblock Primary cyclotomic units and a proof of Catalan's conjecture.
\newblock {\em J. Reine Angew. Math.} {\bf  572}  (2004), 167--195.

\bibitem{Na1}
T. Nagell.
\newblock Des \'equations ind\'etermin\'ees $x^2+x+1=y^n$ et $x^2+x+1=3y^n$.
\newblock {\em Nordsk. Mat. Forenings Skr.} {\bf 2} (1920), 14 pp.

\bibitem{Na2}
T. Nagell.
\newblock Note sur l'\'equation ind\'etermin\'ee $(x^n-1)/(x-1)=y^q$.
\newblock {\em Norsk. Mat. Tidsskr.} {\bf 2} (1920), 75--78.

\bibitem{Sa}
J. W. Sander.
\newblock Irrationality criteria for Mahler's numbers.
\newblock{\em J. Number Theory} {\bf 52} (1995), 145--156.

\bibitem{Sho}
T. N. Shorey.
\newblock Perfect powers in values of certain polynomials at integer points.
\newblock {\em Math. Proc. Cambridge Philos. Soc.} {\bf 99} (1986), 195--207.

\end{thebibliography}
\end{document}